\documentclass[12pt,a4paper]{article}
\usepackage[utf8]{inputenc}
\usepackage{amsmath}
\usepackage{amsfonts}
\usepackage{indentfirst}
\usepackage{enumerate}
\usepackage{enumitem}
\usepackage{amssymb}
\usepackage{graphicx}
\usepackage{amsthm}
\usepackage{hyphenat}
\usepackage{verbatim}
\usepackage{bold-extra}
\usepackage{changepage}

\author{Luke Warren}
\title{On the iteration of quasimeromorphic mappings}

\date{}

\begin{document}

\maketitle
\begin{center}
School of Mathematical Sciences,
University of Nottingham,

University Park,
Nottingham,
NG7 2RD, UK

Email: \texttt{luke.warren@nottingham.ac.uk}
\end{center}

\newtheorem{thm}{Theorem}[section]
\newtheorem{lem}[thm]{Lemma}
\newtheorem{cor}[thm]{Corollary}
\newtheorem{defn}[thm]{Definition}
\newtheorem{conj}[thm]{Conjecture}
\newtheorem{prop}[thm]{Proposition}
\newtheorem{thm*}{Theorem}
\newtheorem{lem*}{Lemma}
\newtheorem{pf*}{Proof of Theorem}
\newtheorem{conj*}{Conjecture}
\newtheorem{prop*}{Proposition}
\numberwithin{equation}{section}

\newcommand{\capacity}{\operatorname{cap}}
\newcommand{\card}{\operatorname{card}}
\renewcommand{\qedsymbol}{}

	\begin{adjustwidth}{1cm}{1cm}
		\textbf{Abstract.} The Fatou-Julia theory for rational functions has been extended both to transcendental meromorphic functions and more recently to several different types of quasiregular mappings in higher dimensions. We extend the iterative theory to quasimeromorphic mappings with an essential singularity at infinity and at least one pole, constructing the Julia set for these maps. We show that this Julia set shares many properties with those for transcendental meromorphic functions and for quasiregular mappings of punctured space.
	\end{adjustwidth}

\section{Introduction}

The Fatou-Julia theory of analytic and meromorphic functions on the complex plane has been extensively studied by various authors; we refer to \cite{BKL1, Beardon1, Bergweiler1, Dominguez, Milnor} for further information. For these functions, the Fatou set is defined using normal families, while the Julia set is defined as the complement of the Fatou set.

Quasiregular mappings and quasimeromorphic mappings are higher dimensional analogues of analytic and meromorphic functions respectively. In general, due to the dilatation growth of iterates of these functions, following the original Fatou and Julia set definitions does not yield useful results (for example, see \cite[Section~5]{Bergweiler3}). Recently, focus has turned towards using a direct definition of the Julia set for quasiregular mappings, after Sun and Yang \cite{SY1} successfully used the `blowing-up' property of the classical Julia set as the definition itself. Since then, the Julia set of other types of quasiregular mappings have been established and studied using a similar method. Examples of these include quasiregular self-maps of $\hat{\mathbb{R}}^d = \mathbb{R}^d \cup \{\infty\}$ in \cite{Bergweiler4}, entire quasiregular mappings of transcendental type in \cite{BN1}, and quasiregular mappings of punctured space in \cite{NS1}. Here we say that a quasiregular or quasimeromorphic mapping $f$ is of transcendental type if $\lim_{|x| \to \infty} f(x)$ does not exist. The definition of a quasiregular and quasimeromorphic mapping will be given in Section~2.1.

If $f$ is a quasimeromorphic mapping of transcendental type with no poles, then it is precisely an entire quasiregular mapping of transcendental type whose Fatou-Julia theory is covered in \cite{BN1}. As their study requires a different method, throughout this paper we shall only consider quasimeromorphic mappings of transcendental type with at least one pole. Examples of such quasimeromorphic mappings of transcendental type can be found in \cite{FN2}, where the dynamics of a higher dimensional analogue of the meromorphic tangent family were considered.

The aim of this paper is to extend the Fatou-Julia theory to the case of quasimeromorphic mappings of transcendental type with at least one pole. We aim to establish a Julia set definition for these mappings and prove that similar classical properties of Julia sets still hold. These results are stronger than those known for the case of entire quasiregular mappings of transcendental type, since in that case it currently remains conjectured that the Julia set can be written using a cardinality condition as in our definition below.
	
	Suppose that $f: \mathbb{R}^d \to \hat{\mathbb{R}}^d$ is a quasimeromorphic mapping of transcendental type. Then for $x \in \hat{\mathbb{R}}^d$, we define the backward orbit of $x$ as
	\begin{equation*}
	\mathcal{O}^{-}_{f}(x) := \bigcup_{m=0}^{\infty} f^{-m}(x).
	\end{equation*}
	For a set $U \subset \mathbb{R}^d \setminus \mathcal{O}^{-}_{f}(\infty)$, we similarly define the forward orbit of $U$ as
	\begin{equation*}
	\mathcal{O}^{+}_{f}(U) := \bigcup_{m=0}^{\infty} f^{m}(U).
	\end{equation*}
	Finally, we define the exceptional set $E(f)$ as the set of points with a finite backward orbit. As a note, if $f$ is a quasimeromorphic mapping of transcendental type then, by Theorem~\ref{BigPicard} for example, $E(f)$ is finite. Now using this notation, we are considering quasimeromorphic mappings of transcendental type such that $\card(\mathcal{O}^{-}_{f}(\infty)) \geq 2$. 
	
	In keeping with the structure of the Julia set definitions for quasiregular mappings, we shall define the Julia set for quasimeromorphic mappings of transcendental type with at least one pole as follows.
	
\begin{defn}
Let $f: \mathbb{R}^d \to \hat{\mathbb{R}}^d$ be a quasimeromorphic mapping of transcendental type with at least one pole. Then the Julia set $J(f)$ is defined as

\begin{align} \label{JuliaSetDefn}
J(f):= \{ &x \in \hat{\mathbb{R}}^d \setminus \overline{\mathcal{O}^{-}_{f}(\infty)} : \card(\hat{\mathbb{R}}^d \setminus \mathcal{O}^{+}_{f}(U_{x}))< \infty \text{ for all} \nonumber \\ 
&\text{neighbourhoods } U_{x} \subset \hat{\mathbb{R}}^d \setminus \overline{\mathcal{O}^{-}_{f}(\infty)} \text{ of } x \} \cup \overline{\mathcal{O}^{-}_{f}(\infty)}.
\end{align}
\end{defn}
		
	As an immediate remark it can be seen that when $\card(\mathcal{O}^{-}_{f}(\infty))= \infty$, then $J(f) = \overline{\mathcal{O}^{-}_{f}(\infty)}$. This is because for any $U \subset \hat{\mathbb{R}}^d \setminus \overline{\mathcal{O}^{-}_{f}(\infty)}$, we must have that $\mathcal{O}^{+}_{f}(U)$ is disjoint from $\mathcal{O}^{-}_{f}(\infty)$. Hence $\hat{\mathbb{R}}^d \setminus \mathcal{O}^{+}_{f}(U)$ is always infinite.
	
	We show that this definition of the Julia set agrees with the classical definition given for transcendental meromorphic functions with at least one pole. There, the Julia set is defined as the complement of the Fatou set, which is the set of all points that have a neighbourhood in which all iterates are well defined and form a normal family.
	
	\begin{thm}\label{JuliaSetsAgree}
	Let $f : \mathbb{C} \to \hat{\mathbb{C}}$ be a transcendental meromorphic function with at least one pole. Then the classical definition of $J(f)$ agrees with \eqref{JuliaSetDefn}.
	\end{thm}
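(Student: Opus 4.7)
The plan is to verify the two inclusions $J_{cl}(f) \subset J(f)$ and $J(f) \subset J_{cl}(f)$, where I write $J_{cl}(f)$ for the classical Julia set (the complement in $\hat{\mathbb{C}}$ of the Fatou set $F(f)$). Before tackling either direction I would record the standard preliminary fact that $\overline{\mathcal{O}^{-}_{f}(\infty)} \subset J_{cl}(f)$: any point in $f^{-n}(\infty)$ is one at which $f^{n+1}$ fails to be meromorphic, hence cannot lie in the Fatou set where all iterates must be well defined and form a normal family, and $J_{cl}(f)$ is closed. This immediately handles the $\overline{\mathcal{O}^{-}_{f}(\infty)}$-part appearing on both sides of \eqref{JuliaSetDefn}.

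For the forward inclusion $J_{cl}(f) \subset J(f)$, I would pick $x \in J_{cl}(f) \setminus \overline{\mathcal{O}^{-}_{f}(\infty)}$ together with an arbitrary neighbourhood $U_{x} \subset \hat{\mathbb{C}} \setminus \overline{\mathcal{O}^{-}_{f}(\infty)}$ of $x$. All iterates $f^{n}$ are meromorphic on $U_{x}$ since $U_{x}$ avoids every preimage of $\infty$, but the family $\{f^{n}\}$ is not normal on $U_{x}$ because $x \in J_{cl}(f)$. Montel's theorem (in its three-value form for meromorphic families) then forces $\mathcal{O}^{+}_{f}(U_{x}) = \bigcup_{n} f^{n}(U_{x})$ to omit at most two points of $\hat{\mathbb{C}}$, so $\card(\hat{\mathbb{C}} \setminus \mathcal{O}^{+}_{f}(U_{x})) \leq 2$, and the cardinality condition in \eqref{JuliaSetDefn} is satisfied.

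For the reverse inclusion I would argue by contrapositive: take $x \in F(f)$ and exhibit a neighbourhood $U_{x}$ violating the cardinality condition. Since $F(f)$ is open and, by the preliminary fact, disjoint from $\overline{\mathcal{O}^{-}_{f}(\infty)}$, I can choose an open $U_{x} \subset F(f)$ that also lies in $\hat{\mathbb{C}} \setminus \overline{\mathcal{O}^{-}_{f}(\infty)}$. Forward invariance of the Fatou set for transcendental meromorphic functions gives $\mathcal{O}^{+}_{f}(U_{x}) \subset F(f)$, and consequently $\hat{\mathbb{C}} \setminus \mathcal{O}^{+}_{f}(U_{x}) \supset J_{cl}(f)$, which is uncountable for a transcendental meromorphic function with a pole and therefore certainly infinite. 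Hence $x \notin J(f)$, completing both inclusions.

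The argument is conceptually light; the only point at which I anticipate needing care is the bookkeeping around $\hat{\mathbb{C}}$ versus $\mathbb{C}$, ensuring that the classical Fatou/Julia partition (which some sources state only for $\mathbb{C}$) is extended to the sphere in the natural way, with $\infty$ regarded as an essential singularity and so as a point of $J_{cl}(f) \cap \overline{\mathcal{O}^{-}_{f}(\infty)}$. Once this convention is fixed, the Montel-based blowing-up argument and the forward invariance of $F(f)$ do all the work.
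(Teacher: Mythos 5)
Your proposal is correct and follows essentially the same route as the paper: one inclusion via Montel's theorem (non-normality forces the forward orbit of any admissible neighbourhood to omit at most two points), and the other via the invariance of the Fatou set together with the infinitude of the classical Julia set, with the observation $\overline{\mathcal{O}^{-}_{f}(\infty)}\subset J_{cl}(f)$ handling the prepole part. The only difference is organisational (you argue one direction directly where the paper argues by contrapositive), which does not change the substance.
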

	
	The main results of this paper are concerned with the properties of $J(f)$ for a quasimeromorphic mapping of transcendental type with at least one pole.
	
	\begin{thm}\label{JuliaSetProperties}
Let $f:\mathbb{R}^d \to \hat{\mathbb{R}}^d$ be a quasimeromorphic mapping of transcendental type with at least one pole. Then the following hold.

\begin{description}
\item[(i)] $J(f) \neq \varnothing$. In fact, $\card(J(f)) = \infty$.
\item[(ii)] $J(f)$ is perfect.
\item[(iii)] $x \in J(f) \setminus \{ \infty \}$ if and only if $f(x) \in J(f)$. In particular, $J(f) \setminus \mathcal{O}^{-}_{f}(\infty)$ is completely invariant.
\item[(iv)] $J(f) \subset \overline{\mathcal{O}^{-}_{f}(x)}$ for every $x \in \hat{\mathbb{R}}^d \setminus E(f)$.
\item[(v)] $J(f) = \overline{\mathcal{O}^{-}_{f}(x)}$ for every $x \in J(f) \setminus E(f)$.
\item[(vi)] Let $U \subset \hat{\mathbb{R}}^d$ be an open set such that $U \cap J(f) \neq \varnothing$. Then for all $x \in \hat{\mathbb{R}}^d \setminus E(f)$, there exists some $w \in U$ and some $k \in \mathbb{N}$ such that $f^k(w)=x$.
\end{description}
\end{thm}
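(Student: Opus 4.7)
The plan is to adapt the classical proof of the corresponding properties for transcendental meromorphic functions, leaning on two workhorses throughout: the openness and discreteness of quasimeromorphic maps inherited from quasiregularity, and Theorem~\ref{BigPicard} at the essential singularity $\infty$. The central auxiliary observation, used repeatedly, is that for any open $U \subset \hat{\mathbb{R}}^d$ the missed set $A(U) := \hat{\mathbb{R}}^d \setminus \mathcal{O}^{+}_{f}(U)$ is backward invariant: indeed $\mathcal{O}^{+}_{f}(U)$ is forward invariant since $f(f^k(U)) \subset f^{k+1}(U) \subset \mathcal{O}^{+}_{f}(U)$, whence $f^{-1}(A(U)) \subset A(U)$. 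Consequently, whenever $A(U)$ is finite, each of its elements has its entire backward orbit inside $A(U)$, forcing $A(U) \subset E(f)$, which is itself finite by Theorem~\ref{BigPicard}.

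I would prove the parts in the order (iii), (iv), (vi), (v), (i), (ii). For forward invariance in (iii), given $x \in J(f) \setminus \overline{\mathcal{O}^{-}_{f}(\infty)}$ with $f(x) \notin \overline{\mathcal{O}^{-}_{f}(\infty)}$ and a neighbourhood $V$ of $f(x)$ in $\hat{\mathbb{R}}^d \setminus \overline{\mathcal{O}^{-}_{f}(\infty)}$, take shrinking neighbourhoods $U_n := f^{-1}(V) \cap B(x, 1/n)$ of $x$; each satisfies the hypothesis of the Julia set definition, so $\mathcal{O}^{+}_{f}(U_n)$ is cofinite and the auxiliary observation gives $A(U_n) \subset E(f)$. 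Since $f(U_n) \subset V$ we have $A(V) \subset U_n \cup A(U_n) \subset U_n \cup E(f)$, and intersecting over $n$ yields $A(V) \subset \{x\} \cup E(f)$, which is finite. Backward invariance, and the cases where $x$ or $f(x)$ lies in $\overline{\mathcal{O}^{-}_{f}(\infty)}$, are handled symmetrically using continuity and openness, which together propagate accumulation points through $f$.

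For (iv), let $y \in J(f)$ and $x \notin E(f)$. If $y \in J(f) \setminus \overline{\mathcal{O}^{-}_{f}(\infty)}$, then any admissible neighbourhood $U$ of $y$ satisfies $A(U) \subset E(f)$, so $x \in \mathcal{O}^{+}_{f}(U)$ and $U$ contains a preimage of $x$. If $y \in \overline{\mathcal{O}^{-}_{f}(\infty)}$, every neighbourhood $U$ of $y$ meets $\mathcal{O}^{-}_{f}(\infty)$, so contains some $y'$ with $f^m(y') = \infty$; openness of $f^m$ on a small sub-neighbourhood of $y'$ produces an open neighbourhood of $\infty$ on which Theorem~\ref{BigPicard} forces $f^{m+1}$ of that set to be cofinite, giving $A(U) \subset E(f)$ and again a preimage of $x$ in $U$. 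Part (vi) is an immediate rewording of (iv). Part (v) follows from (iv) together with $\mathcal{O}^{-}_{f}(x) \subset J(f)$ (iterated (iii)) and the closedness of $J(f)$. For (i), if $|\mathcal{O}^{-}_{f}(\infty)| = \infty$ we are done; otherwise the (iv)-type argument at a pole produces preimages of every $x \notin E(f)$ in any neighbourhood of the pole, in particular yielding some $z \in J(f) \setminus E(f)$, and (v) then gives $J(f) = \overline{\mathcal{O}^{-}_{f}(z)}$, which is infinite. Finally (ii) follows from (v): for any $z \in J(f) \setminus E(f)$, (v) exhibits $z$ as a limit point of its own infinite backward orbit, so $z$ is not isolated, and the finitely many points of $J(f) \cap E(f)$ are then accumulated by such points by a short continuity argument.

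The main obstacle is (iii): the naive containment $\mathcal{O}^{+}_{f}(U) \subset U \cup \mathcal{O}^{+}_{f}(V)$ does not by itself yield cofiniteness of $\mathcal{O}^{+}_{f}(V)$, since $U$ is infinite. The shrinking-neighbourhood trick combined with the backward-invariance observation is what rescues the argument by forcing the missed sets $A(U_n)$ into the fixed finite set $E(f)$ uniformly in $n$. A parallel technicality is that points of $\overline{\mathcal{O}^{-}_{f}(\infty)} \setminus \mathcal{O}^{-}_{f}(\infty)$ demand openness of $f$ to supply honest preimages of $\infty$ inside any test neighbourhood before Theorem~\ref{BigPicard} can be invoked.
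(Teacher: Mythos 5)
Your unified route to \textbf{(iii)}--\textbf{(vi)}, based on the observation that the missed set $\hat{\mathbb{R}}^d \setminus \mathcal{O}^{+}_{f}(U)$ is backward invariant (hence contained in the finite set $E(f)$ whenever it is finite), together with the shrinking-neighbourhood trick for forward invariance and the openness/Rickman argument at prepoles, is essentially sound and is genuinely different from the paper, which in the case $2 \leq \card(\mathcal{O}^{-}_{f}(\infty)) < q$ routes everything through Theorem~\ref{JuliaRelations} ($J(f) = J_{S}(f^{n}) \cup S$ for $n \geq q$, $S = \mathcal{O}^{-}_{f}(\infty)$) and the punctured-space results of Theorem~\ref{J_S(g)Properties}. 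However, there is a genuine gap at \textbf{(i)} (and hence \textbf{(ii)}) precisely in that finite case. There, $\mathcal{O}^{-}_{f}(\infty)$ is a finite backward-invariant set, so every prepole has finite backward orbit and $\overline{\mathcal{O}^{-}_{f}(\infty)} = \mathcal{O}^{-}_{f}(\infty) \subset E(f)$. Your plan is to produce some $z \in J(f) \setminus E(f)$ and then quote \textbf{(v)}; but the blow-up argument at a pole only shows that preimages of non-exceptional points accumulate at the pole, i.e.\ it confirms that the pole lies in $J(f)$ (which holds by definition anyway) and gives no reason why any of those preimages, or any point at all outside $\mathcal{O}^{-}_{f}(\infty)$, belongs to $J(f)$. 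The existence of $z \in J(f) \setminus E(f)$ is exactly the assertion that $J(f)$ is strictly larger than the finite set of prepoles, which is the content of \textbf{(i)}; as far as your arguments go, $J(f)$ could equal $\mathcal{O}^{-}_{f}(\infty)$, and all of your \textbf{(iii)}--\textbf{(vi)} would hold for that finite set. So the step ``in particular yielding some $z \in J(f)\setminus E(f)$'' is circular.

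This is the point where the paper needs a substantial external input: $f^{n}$, $n \geq q$, is quasiregular of $S$-transcendental type on $\hat{\mathbb{R}}^d \setminus S$, and Theorem~\ref{J_S(g)Properties}(a) (Nicks--Sixsmith) supplies the infinitude and perfectness of $J_{S}(f^{n})$; no normal-family or Montel-type argument is available in the quasiregular setting, so some such machinery (or a capacity argument in the spirit of Bergweiler--Nicks) cannot be avoided. The same gap infects your \textbf{(ii)}: perfectness at the prepoles needs non-prepole Julia points nearby. Two smaller points: in \textbf{(ii)}, ``(v) exhibits $z$ as a limit point of its own backward orbit'' does not follow as stated, since $z \in \mathcal{O}^{-}_{f}(z)$ trivially; the standard fix chooses a second point of $J(f)\setminus E(f)$ avoiding the relevant forward orbit, which again presupposes \textbf{(i)}. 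Also, your use of $\mathcal{O}^{+}_{f}(U)$ for neighbourhoods $U$ meeting $\mathcal{O}^{-}_{f}(\infty)$ requires a (routine) convention about partially defined orbits, since the paper only defines $\mathcal{O}^{+}_{f}$ on sets avoiding $\mathcal{O}^{-}_{f}(\infty)$. When $\card(\mathcal{O}^{-}_{f}(\infty)) = \infty$, your outline is correct and close to the paper's Section~4.
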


We should remark here that \textbf{(vi)} is a slightly different version of the `blowing-up' property exhibited by (classical) Julia sets. This is to take into account the fact that the forward orbit is not well defined for elements in $\mathcal{O}^{-}_{f}(\infty)$.

A useful property of classical Julia sets is that the Julia set of a function is equal to the Julia sets of any iterate. However, if $f$ is a quasimeromorphic mapping then the iterates $f^n$, $n \in \mathbb{N}$, are not in general quasimeromorphic. Nonetheless, we can prove that a similar property holds for our mappings.

\begin{thm}\label{JuliaIterates}
Let $f: \mathbb{R}^d \to \hat{\mathbb{R}}^d$ be a quasimeromorphic mapping of transcendental type with at least one pole. Then for each $n \in \mathbb{N}$,
\begin{align} \label{JuliaIteratesEquation}
J(f)= \{ &x \in \hat{\mathbb{R}}^d \setminus \overline{\mathcal{O}^{-}_{f}(\infty)} : \card(\hat{\mathbb{R}}^d \setminus \mathcal{O}^{+}_{f^n}(U_{x}))< \infty \text{ for all} \nonumber \\ 
&\text{neighbourhoods } U_{x} \subset \hat{\mathbb{R}}^d \setminus \overline{\mathcal{O}^{-}_{f}(\infty)} \text{ of } x \} \cup \overline{\mathcal{O}^{-}_{f}(\infty)}.
\end{align}
\end{thm}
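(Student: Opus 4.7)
Let $J_n(f)$ denote the right-hand side of \eqref{JuliaIteratesEquation}. I will prove both inclusions.

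The inclusion $J_n(f) \subseteq J(f)$ is immediate. Since $\mathcal{O}^{+}_{f^n}(U_x) \subseteq \mathcal{O}^{+}_{f}(U_x)$ for any set $U_x$, any neighborhood for which the $f^n$-orbit complement is finite also has finite $f$-orbit complement; combined with the trivial $\overline{\mathcal{O}^{-}_{f}(\infty)} \subseteq J(f)$, the inclusion follows.

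For the reverse inclusion, fix $x \in J(f) \setminus \overline{\mathcal{O}^{-}_{f}(\infty)}$ and an open neighborhood $U_x \subset \hat{\mathbb{R}}^d \setminus \overline{\mathcal{O}^{-}_{f}(\infty)}$ of $x$. I aim to show the stronger assertion $\mathcal{O}^{+}_{f^n}(U_x) \supseteq \hat{\mathbb{R}}^d \setminus E(f)$, which suffices since $E(f)$ is finite by Theorem~\ref{BigPicard}. First observe that each $f^i(U_x)$, for $i = 0, 1, \ldots, n-1$, is open (by openness of quasimeromorphic mappings), contains $f^i(x) \in J(f)$ (by \textbf{(iii)}), and lies in $\hat{\mathbb{R}}^d \setminus \overline{\mathcal{O}^{-}_{f}(\infty)}$ (by forward invariance of this complement under $f$, itself a consequence of openness). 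Thus each $f^i(U_x)$ is an admissible neighborhood of the Julia point $f^i(x)$, and applying the blowing-up property \textbf{(vi)} to each gives
$$\mathcal{O}^{+}_{f}(U_x) = \bigcup_{i=0}^{n-1} \mathcal{O}^{+}_{f^n}(f^i(U_x)) \supseteq \hat{\mathbb{R}}^d \setminus E(f),$$
but a priori the $i = 0$ piece alone need not be cofinite.

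Fix $y \in \hat{\mathbb{R}}^d \setminus E(f)$; it remains to find $w \in U_x$ and $k \geq 0$ with $f^{nk}(w) = y$. By \textbf{(iv)}, $J(f) \subseteq \overline{\mathcal{O}^{-}_{f}(y)}$, so there exist $z_m \in U_x$ with $z_m \to x$ and $f^{k_m}(z_m) = y$; since $x \neq \infty$ and the preimage sets $f^{-k}(y)$ are discrete subsets of $\mathbb{R}^d$ with accumulation only at $\infty$ (a consequence of Theorem~\ref{BigPicard}), we have $k_m \to \infty$. The main obstacle is that a naive pigeonhole only guarantees some fixed residue $k_m \bmod n$ to occur infinitely often, not necessarily the residue $0$. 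To force residue $0$, I would repeat the density argument of \textbf{(iv)} at each of the Julia points $f^r(x)$, $r = 0, 1, \ldots, n-1$, lifting the resulting preimage sequences back to neighborhoods of $x$ via openness of $f^r$; each such lift shifts the witnessing residue by $r$. By tracking these shifted residues across the $n$ choices of $r$ and appealing to the complete invariance of $J(f) \setminus \mathcal{O}^{-}_{f}(\infty)$ from \textbf{(iii)}, one ultimately produces a preimage sequence accumulating at $x$ with residue $0$ modulo $n$, yielding the required $w \in U_x \cap \mathcal{O}^{-}_{f^n}(y)$. The combinatorial bookkeeping needed to guarantee that residue $0$ is realized, rather than merely some other residue, constitutes the technical heart of the argument.
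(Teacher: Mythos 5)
Your first inclusion is fine and is the same as the paper's. The reverse inclusion, however, has a genuine gap, and it is exactly where you say the ``technical heart'' lies: you never prove that a preimage of $y$ can be found in $U_x$ whose iteration exponent is divisible by $n$. Properties \textbf{(iv)} and \textbf{(vi)} only give, for each point $f^r(x)$, preimages of $y$ accumulating there with \emph{some} exponents $k_m$, and lifting these through the open map $f^r$ shifts each exponent by $+r$; but nothing in your argument controls which residue classes mod $n$ the $k_m$ occupy at $f^r(x)$ in the first place, so ``tracking shifted residues over $r=0,\dots,n-1$'' cannot force residue $0$ to be realised. A priori, all exponents of preimages of a given $y$ accumulating along the orbit of $x$ could lie in a single nonzero residue class; excluding this is essentially the statement that the Julia set is unchanged under passing to iterates, which is a nontrivial theorem (in \cite{NS1} the analogous fact, Theorem~\ref{J_S(g)Properties}(e), is not a consequence of the blowing-up property alone). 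In addition, your strengthened target $\mathcal{O}^{+}_{f^n}(U_x) \supseteq \hat{\mathbb{R}}^d\setminus E(f)$ asks for more than cofiniteness and would need a comparison of $E(f)$ with the exceptional set of $f^n$, which you do not supply.

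The paper avoids this difficulty entirely. When $\card(\mathcal{O}^{-}_{f}(\infty))=\infty$ both sides of \eqref{JuliaIteratesEquation} reduce to $\overline{\mathcal{O}^{-}_{f}(\infty)}$ by the remark after the definition of $J(f)$. When $S:=\mathcal{O}^{-}_{f}(\infty)$ is finite, Theorem~\ref{JuliaRelations} together with \eqref{(d)Extension} (i.e.\ Theorem~\ref{J_S(g)Properties}(e)) gives $J(f)\setminus S = J_{S}(f^{nq})$; since every iterate of $f^{nq}$ is an iterate of $f^n$, one has $\mathcal{O}^{+}_{f^{nq}}(V_x)\subseteq \mathcal{O}^{+}_{f^n}(V_x)$, so cofiniteness of the complement for $f^{nq}$ immediately yields it for $f^n$. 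In other words, the divisibility problem you ran into is absorbed into the already-established punctured-space result for iterates; to make your self-contained route work you would have to reprove something of that strength, and properties \textbf{(iii)}--\textbf{(vi)} for $f$ alone do not do it.
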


Quasimeromorphic mappings of transcendental type with at least one pole are closely related to quasiregular mappings in $S$-punctured space (see Section~2.3 below) and to entire quasiregular mappings of transcendental type. For both of these types of mappings, the concept of capacity plays an important role in the Fatou-Julia theory. For the former case, the relationship between the capacity and cardinality of backward orbits has been established; for the latter case, this remains an open conjecture (see \cite{BN1}). Motivated by this, the relationship between cardinality and capacity in the new setting has also been established. Combining this result with \cite[Proposition~3.4]{NS1} gives the following improved theorem.

\begin{thm} \label{backwardOrbitCapacity}
Let $f: \mathbb{R}^d \to \hat{\mathbb{R}}^d$ be a quasimeromorphic mapping of transcendental type with at least one pole. Then $x \in E(f)$ if and only if $cap\left(\overline{\mathcal{O}^{-}_{f}(x)}\right)= 0$.
\end{thm}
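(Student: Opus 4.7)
The plan is to establish the two implications of the biconditional separately. The forward direction is essentially immediate: if $x \in E(f)$ then $\mathcal{O}^{-}_{f}(x)$ is a finite subset of $\hat{\mathbb{R}}^d$ by definition of the exceptional set, so its closure coincides with it and is also finite. Any finite subset of $\hat{\mathbb{R}}^d$ has conformal capacity zero in dimension $d \geq 2$, hence $\capacity(\overline{\mathcal{O}^{-}_{f}(x)}) = 0$.

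For the reverse implication, assume $\capacity(\overline{\mathcal{O}^{-}_{f}(x)}) = 0$; the aim is to conclude that $\mathcal{O}^{-}_{f}(x)$ is finite, i.e. $x \in E(f)$. The strategy is a reduction to the setting of $S$-punctured space, where \cite[Proposition~3.4]{NS1} provides an analogous cardinality-to-capacity correspondence for quasiregular mappings. Set $S = \overline{\mathcal{O}^{-}_{f}(\infty)}$, which is non-trivial by the standing hypothesis $\card(\mathcal{O}^{-}_{f}(\infty)) \geq 2$. The restriction of $f$ to $\mathbb{R}^d \setminus S$ is a quasiregular self-map $\tilde{f}$ of this $S$-punctured space (here the poles of $f$ have been removed along with their backward orbit), and for any point in this punctured domain the backward orbits under $f$ and $\tilde{f}$ coincide. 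Thus \cite[Proposition~3.4]{NS1} directly delivers the contrapositive implication for points $x \notin S$.

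The remaining step, which I expect to be the most delicate, is to handle the case $x \in S$ --- that is, to establish the desired cardinality-capacity correspondence when $x$ itself lies in the closed backward orbit of $\infty$. This is precisely the relationship that the excerpt alludes to the author having established in the new quasimeromorphic setting, and it is the content one must combine with \cite[Proposition~3.4]{NS1} to obtain the full biconditional. My approach would be to leverage Theorem~\ref{JuliaSetProperties}(iv), which gives $J(f) \subset \overline{\mathcal{O}^{-}_{f}(x)}$ for every $x \notin E(f)$, thereby reducing the task to showing that $J(f)$ itself has positive capacity. To achieve this, I would select a single non-exceptional point $y \notin S$ (whose existence follows from the finiteness of $E(f)$ together with the fact that the punctured domain $\mathbb{R}^d \setminus S$ is uncountable), apply the case already settled above to obtain $\capacity(\overline{\mathcal{O}^{-}_{f}(y)}) > 0$, and then invoke Theorem~\ref{JuliaSetProperties}(iv) at $y$ to conclude $\capacity(J(f)) > 0$. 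A final application of the same inclusion at the original $x$ then gives $\capacity(\overline{\mathcal{O}^{-}_{f}(x)}) > 0$, completing the contrapositive and hence the theorem.
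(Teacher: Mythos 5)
Your forward direction is fine, but the reverse direction has two genuine gaps. First, the reduction to \cite[Proposition~3.4]{NS1} does not go through: the punctured-space theory there requires the puncture set $S$ to be \emph{finite} and the map to be quasiregular of $S$-transcendental type on $\hat{\mathbb{R}}^d \setminus S$. You take $S = \overline{\mathcal{O}^{-}_{f}(\infty)}$, but the case that actually needs work (and the one the paper devotes Section~5 to, the finite case being handled by passing to iterates $f^n$ with $n \geq q$) is $\card(\mathcal{O}^{-}_{f}(\infty)) = \infty$, where your $S$ is an infinite closed set; no version of \cite{NS1} applies to infinitely punctured space, so the ``case already settled above'' that your later argument leans on is not settled. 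Even when $\mathcal{O}^{-}_{f}(\infty)$ is finite, the restriction of $f$ itself is not of $S$-transcendental type (the prepoles are not essential singularities of $f$, only of $f^n$ for $n \geq q$), so one must work with a high iterate, as the paper does in Section~3.

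Second, your capacity step runs monotonicity the wrong way: Theorem~\ref{JuliaSetProperties}(iv) gives $J(f) \subset \overline{\mathcal{O}^{-}_{f}(y)}$, and from positivity of the capacity of the \emph{larger} set you cannot conclude $\capacity(J(f)) > 0$ --- a subset of a positive-capacity set may well have zero capacity. You also cannot rescue this via part (v), because in the infinite case $J(f) = \overline{\mathcal{O}^{-}_{f}(\infty)} = S$, so there is no non-exceptional $y \in J(f) \setminus S$ to use. The paper's actual argument for the infinite case is a direct construction: using Theorem~\ref{BigPicard} it finds an $N$-prepole $y$ and a bounded neighbourhood $U_y$ on which every point has infinitely many $f$-preimages, then (Lemma~\ref{EExists}, applied repeatedly) produces infinitely many bounded sets $E_n$ with pairwise disjoint closures satisfying $f^N(U_y) \supset \overline{E_n}$ and $f(E_n) \supset \overline{U_y}$; pulling back gives $m > K_{I}(f^{N+1})$ disjoint closed subsets $V_i \subset U_y$ with $f^{N+1}(V_i) \supset \overline{U_y}$, and the capacity lemmas of \cite[Lemmas~2.9--2.11]{BN1} then yield $\capacity\bigl(\overline{\mathcal{O}^{-}_{f}(w)}\bigr) > 0$ for every $w \in \overline{U_y}$. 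Finally, for an arbitrary $x \notin E(f)$, Rickman's theorem places a point of $\mathcal{O}^{-}_{f}(x)$ in $f^N(U_y)$, whose $f^N$-preimage $u_\alpha \in U_y$ lies in $\mathcal{O}^{-}_{f}(x)$, and now monotonicity is used in the correct direction: $\overline{\mathcal{O}^{-}_{f}(u_\alpha)} \subset \overline{\mathcal{O}^{-}_{f}(x)}$ gives $\capacity\bigl(\overline{\mathcal{O}^{-}_{f}(x)}\bigr) > 0$. You would need an argument of this kind (or some other direct lower bound on capacity) to close the gap.
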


Firstly in Section~2 we shall provide some important definitions, alongside some key results and observations regarding quasimeromorphic mappings of transcendental type with at least one pole. Moreover a brief proof of Theorem~\ref{JuliaSetsAgree} will be given. Section~3 will be concerned with the proof of Theorem~\ref{JuliaSetProperties} and Theorem~\ref{JuliaIterates} when $\mathcal{O}^{-}_{f}(\infty)$ is finite, while Section~4 will focus on the case when $\card(\mathcal{O}^{-}_{f}(\infty)) = \infty$. Finally, in Section~5 we shall establish Theorem~\ref{backwardOrbitCapacity}, focusing on the case when $\card(\mathcal{O}^{-}_{f}(\infty)) = \infty$. 

	\section{Preliminary results}	
	
	\subsection{Quasiregular and quasimeromorphic mappings}
	
	We shall briefly recall the definition of quasiregular and quasimeromorphic mappings here, and refer to \cite{Rickman3} for a more detailed introduction.
	
Let $d \geq 2$ and $D \subset \mathbb{R}^d$ be a domain. For $p \in [1,\infty)$, the Sobolev space $W_{p,loc}^{1}(D)$ consists of all functions $f:D \to \mathbb{R}^d$ for which all first order weak partial derivatives exist and are locally in $L^p(D)$. A continuous map $f \in  W_{d,loc}^{1}(D)$ is called quasiregular if there exists some constant $K_{1}\geq 1$ such that 
\begin{equation}\label{qrDefn1}
\left(\sup_{|v|=1}|Df(x)(v)|\right)^d \leq K_{1}J_{f}(x) \text{ a.e.},
\end{equation}
where $Df(x)$ is the derivative of $f(x)$ and $J_{f}(x)$ denotes the Jacobian determinant. 

If $f$ is quasiregular, then there exists some $K_{2} \geq 1$ such that
\begin{equation}\label{qrDefn2}
K_{2}\left(\inf_{|v|=1}|Df(x)(v)|\right)^d \geq J_{f}(x) \text{ a.e.}
\end{equation}
 
The smallest constants $K_{1}$ and $K_{2}$ for which \eqref{qrDefn1} and \eqref{qrDefn2} hold are called the outer and inner dilatation and are denoted $K_{O}(f)$ and $K_{I}(f)$ respectively. If $\max\{K_{O}(f), K_{I}(f)\} \leq K$ for some $K \geq 1$, then we say that $f$ is $K$-quasiregular.
	
Quasiregularity can be extended to mappings between Riemannian manifolds. In particular, consider $\hat{\mathbb{R}}^d = \mathbb{R}^d \cup \{\infty\}$ equipped with the spherical metric. Then for a domain $G \subset \mathbb{R}^d$, a continuous map $f : G \to \hat{\mathbb{R}}^d$ is called quasimeromorphic if every $x \in G$ has a neighbourhood $U_{x}$ such that either $f$ or $M \circ f$ is quasiregular from $U_{x}$ into $\mathbb{R}^d$, where $M:\hat{\mathbb{R}}^d \to \hat{\mathbb{R}}^d$ is a sense-preserving M\"{o}bius map such that $M(\infty) \in \mathbb{R}^d$.

	If $f$ and $g$ are quasiregular mappings, with $f$ defined in the range of $g$, then $f \circ g$ is quasiregular, with 
	\begin{equation*}
	K_{O}(f \circ g) \leq K_{O}(f)K_{O}(g) \text{ and } K_{I}(f \circ g) \leq K_{I}(f)K_{I}(g).
	\end{equation*}
	
Similarly, if $g$ is quasiregular and $f$ is quasimeromorphic in the range of $g$, then $f \circ g$ is quasimeromorphic and the same inequalities as above hold.
	
	Many properties of analytic and meromorphic functions have analogues for quasiregular and quasimeromorphic mappings respectively. For instance, Reshetnyak showed in \cite{Reshetnyak1, Reshetnyak2} that non-constant quasiregular mappings are discrete, open, and sense-preserving. 
	
	Denote the region between two spheres centred at the origin of radii $0 \leq r < s \leq \infty$, by
\begin{equation*}
A(r,s) = \{ x \in \mathbb{R}^d : r < |x| < s \}.
\end{equation*}
An important result is that of Rickman, who proved the following analogue of Picard's theorem in \cite{Rickman1, Rickman2}.

	\begin{thm} \label{BigPicard}
Let $d \geq 2$, $K\geq 1$, $\rho>0$ and let $f : A(\rho, \infty) \to \hat{\mathbb{R}}^d\setminus\{a_1, a_2, \dots, a_p \}$ be a $K$-quasimeromorphic mapping with the $a_{i} \in \hat{\mathbb{R}}^d$ distinct for $i = 1, 2, \dots, p$. Let $q = q(d,K)$ be Rickman's constant. Then $f$ has a limit at $\infty$ whenever $p \geq q$.
\end{thm}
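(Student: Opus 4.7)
The plan is to reduce this ``big'' Picard-type statement to Rickman's ``little'' Picard theorem, namely that a non-constant quasimeromorphic map $\mathbb{R}^d \to \hat{\mathbb{R}}^d$ can omit at most $q(d,K)-1$ values. The bridge is a normal-family / rescaling argument of Zalcman--Miniowitz type.

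First I would pre-compose with the inversion $x \mapsto x/|x|^2$ to replace $f$ on $A(\rho,\infty)$ by a $K$-quasimeromorphic map $g$ on the punctured ball $A(0, 1/\rho) \setminus \{0\}$, omitting the same values $a_1,\dots,a_p$, so that the potentially essential singularity sits at the origin. Suppose for contradiction that $g$ has no limit at $0$. Then one can find sequences $x_n, y_n \to 0$ along which $g(x_n) \to \alpha$ and $g(y_n) \to \beta$ with $\alpha \neq \beta$ in the spherical metric. I would then construct rescalings $g_n(x) = g(r_n x)$ on a fixed annulus such as $A(1/2, 2)$, choosing the scales $r_n$ so that along the circles $\{|x|=1\}$ the images $g_n(\{|x|=1\})$ witness both limiting values. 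Each $g_n$ is $K$-quasimeromorphic and still omits $a_1, \dots, a_p$.

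Next I would invoke Miniowitz's analogue of Montel's theorem for quasimeromorphic mappings omitting $q(d,K)$ values, which is itself derived from Rickman's little Picard theorem together with modulus-of-path-family estimates; this makes $\{g_n\}$ a normal family in the spherical sense on $A(1/2,2)$. Pass to a subsequential spherical limit $G$ on this annulus. By construction $G$ cannot be constant, since it must take the distinct values $\alpha$ and $\beta$ (or at least non-constant cluster values derived from them) along $\{|x|=1\}$. Thus $G$ is a non-constant $K$-quasimeromorphic map on $A(1/2, 2)$ still omitting the $p$ values $a_i$.

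Finally I would iterate the rescaling: applying the same normality argument to the further rescalings $G(r^k x)$ for $r \in (1/2, 2)$ and $k \in \mathbb{Z}$, one patches together a non-constant $K$-quasimeromorphic map $\tilde{G} : \mathbb{R}^d \setminus \{0\} \to \hat{\mathbb{R}}^d \setminus \{a_1,\dots,a_p\}$. Removing the isolated singularity at $0$ via another application of Picard (using that $p \geq q$), this extends to a non-constant $K$-quasimeromorphic map $\mathbb{R}^d \to \hat{\mathbb{R}}^d$ omitting at least $q(d,K)$ values, contradicting Rickman's little Picard theorem. The main obstacle is the first normality step: proving Miniowitz's quasi-normal family theorem requires the full strength of the modulus inequalities and Rickman's counting-function machinery, so in practice one simply quotes it; the auxiliary technical nuisance is ensuring that the scales $r_n$ can be chosen so that the limit $G$ is genuinely non-constant, which is handled by a standard Zalcman rescaling lemma adapted to the quasimeromorphic setting.
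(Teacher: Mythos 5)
The paper does not prove this statement at all: Theorem~\ref{BigPicard} is Rickman's big Picard theorem, quoted with references to \cite{Rickman1, Rickman2}, and its proof rests on Rickman's value-distribution and modulus machinery. The shortcut you gesture at --- deduce the big Picard-type statement from the little Picard theorem via Miniowitz's Montel-type normality theorem and rescaling --- is indeed the standard route in the literature, so your overall strategy is reasonable; the problem is that your execution of it has genuine gaps.

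First, your mechanism for producing a non-constant limit is the wrong one. Zalcman-type rescaling produces non-constant limits precisely from \emph{non-normal} families; once you invoke Miniowitz's theorem, the rescaled family $g_n(x)=g(r_nx)$ is normal, and if $g$ tends to different constants along different sequences of spheres then every subsequential limit may simply be a constant, so no choice of scales $r_n$ is guaranteed to ``witness both values'' on a single sphere. The standard argument does not need a non-constant limit: one Möbius-shifts an omitted value to $\infty$ so the maps are quasiregular, and then a subsequential limit that is not the constant $\infty$ forces $g$ to be bounded on a sequence of spheres shrinking to the puncture; openness of quasiregular maps (maximum principle) bounds $g$ on the annuli in between, and the removable singularity theorem for bounded quasiregular maps gives the limit, while the case where every limit is $\equiv\infty$ gives $g\to\infty$ directly. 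Second, the patching step is unjustified: $G$ is defined only on $A(1/2,2)$, its rescalings $G(r^kx)$ do not extend it, and subsequential limits over different annuli need not agree on overlaps; one must instead run a diagonal argument on the original rescalings over an exhaustion of $\mathbb{R}^d\setminus\{0\}$, and a Hurwitz-type degree argument (using openness and discreteness) is needed to see that a non-constant limit still omits $a_1,\dots,a_p$, which you assert without proof. Third, and most seriously, your final step ``removing the isolated singularity at $0$ via another application of Picard'' is circular: the existence of a limit at an isolated singularity for a $K$-quasimeromorphic map omitting $q$ values is exactly the statement being proved, transported from $\infty$ to $0$ by inversion. As written, the argument does not close; the corrected version is essentially the constant-limit argument sketched above, which bypasses your non-constant limit and patching steps entirely.
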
	

	As a remark, for $K$-quasiregular mappings the quantity $q -1$ is also referred to as Rickman's constant, to compensate for the fact that infinity is omitted.
	
As we are considering mappings of transcendental type, the above theorem can be used to establish the cases required for further analysis. Indeed, suppose that $f$ is a quasimeromorphic mapping of transcendental type with at least one pole. If $\card(\mathcal{O}^{-}_{f}(\infty))\geq q$, then by Theorem~\ref{BigPicard} there exists some $x \in \mathcal{O}^{-}_{f}(\infty)$ such that $\card(f^{-1}(x)) = \infty$. Therefore we have two cases to consider: either $2 \leq \card(\mathcal{O}^{-}_{f}(\infty)) <q$ or $\card(\mathcal{O}^{-}_{f}(\infty)) = \infty$. 
	
	\subsection{Capacity of a condenser}
Let $A \subset \mathbb{R}^d$ be an open set and let $E \subset A$ be non-empty and compact. The pair $(A,E)$ is called a condenser and the (conformal) capacity of $(A,E)$, denoted $\capacity(A,E)$, is defined by
\begin{equation*}
\capacity(A,E) := \inf_{u}\int_{A}|\nabla u|^d dm,
\end{equation*}
where the infimum is taken over all non-negative functions $u \in C_{0}^{\infty}(A)$ such that for all $x \in E$, $u(x) \geq 1$.

It was shown by Reshetnyak \cite{Reshetnyak1} that if $\capacity(A,E)=0$ for some bounded open set $A \supset E$, then $\capacity(A',E)=0$ for all bounded open sets $A' \supset E$. In this case, we say that $E$ has zero capacity and write $\capacity(E)=0$; otherwise we say that $E$ has positive capacity and write $\capacity(E)>0$. For an unbounded closed set $E \subset \mathbb{R}^d$, we say that $\capacity(E)=0$ if $\capacity(C)=0$ for every compact set $C \subset E$. It is known that sets of zero capacity have Hausdorff dimension zero; see \cite[Theorem 4.1]{Wa2}. Conversely, it is known that finite sets have zero capacity. Thus, informally, sets of zero capacity are `small'.
	
	\subsection{Quasiregular mappings in $S$-punctured space}
To analyse the case when $2 \leq \card(\mathcal{O}^{-}_{f}(\infty)) <q$, we will need to consider the behaviour of quasiregular mappings in punctured space. The iterative theory of such mappings has been studied by Nicks and Sixsmith in \cite{NS1}, thus we shall only state the definition and a few key results here.

Let $d \geq 2, n \in \mathbb{N}$ be fixed and let $S := \{\infty, s_{1}, s_{2}, \dots, s_{n}\}$ be a finite set of distinct points in $\hat{\mathbb{R}}^d$. We note that it is important that $n \geq 1$, so $\card(S) \geq 2$. Now a quasiregular mapping $g: \hat{\mathbb{R}}^d \setminus S \to \hat{\mathbb{R}}^d \setminus S$ is said to be of $S$-transcendental type if $S$ coincides with the set of essential singularities of $g$. The Julia set $J_{S}(g)$ is then defined as
\begin{align}\label{SPunctDef'n}
J_{S}(g) = \{ &x \in \hat{\mathbb{R}}^d \setminus S : \card(\hat{\mathbb{R}}^d \setminus \mathcal{O}^{+}_{g}(U_{x}))< \infty \text{ for all} \nonumber \\ 
&\text{neighbourhoods } U_{x} \subset \hat{\mathbb{R}}^d \setminus S \text{ of } x \}.
\end{align}

We summarise some of the results found in \cite{NS1} in the following theorem. Here, any closure is taken with respect to $\hat{\mathbb{R}}^d \setminus S$, unless stated otherwise.

\begin{thm}[\cite{NS1}]\label{J_S(g)Properties}
Let $S \subset \hat{\mathbb{R}}^d$ be a finite set with $\infty \in S$ and suppose that $g : \hat{\mathbb{R}}^d \setminus S \to \hat{\mathbb{R}}^d \setminus S$ is a quasiregular map of $S$-transcendental type. Then the following hold.
\begin{description}
\item[(a)] $J_{S}(g)$ is infinite and perfect.
\item[(b)] $x \in J_{S}(g)$ if and only if $g(x) \in J_{S}(g)$.
\item[(c)] For all $x \in \mathbb{R}^d \setminus E(g)$, we have $J_{S}(g) \subset \overline{\mathcal{O}^{-}_{g}(x)}$.
\item[(d)] For all $x \in J_{S}(g) \setminus E(g)$, we have $J_{S}(g) = \overline{\mathcal{O}^{-}_{g}(x)}$.
\item[(e)] $J_{S}(g) = J_{S}(g^k)$ for each $k \in \mathbb{N}$.
\item[(f)] The closure of all components of $J_{S}(g)$ with respect to $\hat{\mathbb{R}}^d$ meet $S$.
\end{description}
\end{thm}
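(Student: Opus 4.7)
The plan is to prove the six assertions in an order that reflects their logical dependence, exploiting three main ingredients throughout: Rickman's theorem (Theorem~\ref{BigPicard}), applied in punctured neighbourhoods of each essential singularity in $S$, which bounds the number of omitted values and in particular forces $E(g)$ to be finite; the fact that non-constant quasiregular mappings are open and discrete (Reshetnyak); and the definition of $J_S(g)$ itself, whereby any neighbourhood $U$ of a point $x \in J_S(g)$ has $\hat{\mathbb{R}}^d \setminus \mathcal{O}^{+}_{g}(U)$ finite.

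Properties (b) and (c) form the backbone and I would establish them first. For (b), if $g(x) \in J_S(g)$ and $U$ is a neighbourhood of $x$ in $\hat{\mathbb{R}}^d \setminus S$, then $g(U)$ is an open neighbourhood of $g(x)$ by openness of $g$, and $\mathcal{O}^{+}_{g}(g(U)) \subset \mathcal{O}^{+}_{g}(U)$, so the cofiniteness condition transfers to $U$; the converse follows by selecting $U$ such that $g(U)$ lies in a prescribed neighbourhood of $g(x)$, using continuity. For (c), take $y \in J_S(g)$, a neighbourhood $U$ of $y$, and $x \notin E(g)$. The set $\hat{\mathbb{R}}^d \setminus \mathcal{O}^{+}_{g}(U)$ is finite while $\mathcal{O}^{-}_{g}(x)$ is infinite, so some $z \in \mathcal{O}^{-}_{g}(x)$ lies in $\mathcal{O}^{+}_{g}(U)$; writing $z = g^n(w)$ with $w \in U$ and combining with $g^k(z) = x$ yields $g^{n+k}(w) = x$, whence $w \in U \cap \mathcal{O}^{-}_{g}(x)$. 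Property (d) then follows from (c) together with the inclusion $\mathcal{O}^{-}_{g}(x) \subset J_S(g)$ obtained by iterating (b), and the fact that $J_S(g)$ is closed, which is direct from the definition.

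For (a), infiniteness is immediate from (c), which already exhibits infinitely many preimage points of any non-exceptional $x$ inside $J_S(g)$. Perfectness then follows by a standard argument: given $y \in J_S(g)$, one applies (c) and (d) with a non-exceptional point in $J_S(g)$ distinct from $y$, together with Rickman's theorem to rule out isolated limit behaviour, producing points of $J_S(g)$ arbitrarily close to $y$ but distinct from it. For (e), the inclusion $J_S(g^k) \subset J_S(g)$ is immediate from $\mathcal{O}^{+}_{g^k}(U) \subset \mathcal{O}^{+}_{g}(U)$, while the reverse inclusion is a subtler consequence of (b) and (c) applied to both $g$ and $g^k$: the decomposition $\mathcal{O}^{+}_{g}(U) = \bigcup_{j=0}^{k-1} \mathcal{O}^{+}_{g^k}(g^j(U))$ combined with invariance of $J_S(g)$ under $g^j$ and openness allows one to couple the blow-up dynamics of $g$ to those of $g^k$ and recover cofiniteness of $\mathcal{O}^{+}_{g^k}(U)$. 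Finally for (f), if some component $C$ of $J_S(g)$ with closure taken in $\hat{\mathbb{R}}^d$ were disjoint from $S$, then $C$ would be a compact subset of $\hat{\mathbb{R}}^d \setminus S$ separable from $S$ by disjoint open sets; this contradicts (c), which via Rickman's theorem forces preimages of a suitable non-exceptional point to accumulate arbitrarily close to each essential singularity in $S$. I expect (e) and (f) to be the main obstacles, as (e) requires careful bookkeeping when passing from $g$ to $g^k$ and (f) couples the blow-up dynamics with topological control near $S$.
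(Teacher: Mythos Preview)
The paper does not prove this theorem at all: it is quoted verbatim from \cite{NS1} with the sentence ``We summarise some of the results found in \cite{NS1} in the following theorem,'' and no argument is given. So there is no proof in this paper to compare your proposal against.

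That said, a few of your sketched steps would not go through as written. Your argument for infiniteness in (a) is circular: part (c) only gives $J_S(g)\subset\overline{\mathcal{O}^{-}_{g}(x)}$, which says nothing if $J_S(g)=\varnothing$, and you have not established non-emptiness beforehand. In the actual development in \cite{NS1} (and in the analogous entire case in \cite{BN1}), non-emptiness is the genuinely hard step and uses capacity estimates rather than anything in your list of ingredients. Your sketch for (e) is too vague to constitute an argument; the decomposition $\mathcal{O}^{+}_{g}(U)=\bigcup_{j=0}^{k-1}\mathcal{O}^{+}_{g^k}(g^j(U))$ alone does not yield cofiniteness of $\hat{\mathbb{R}}^d\setminus\mathcal{O}^{+}_{g^k}(U)$ from cofiniteness of $\hat{\mathbb{R}}^d\setminus\mathcal{O}^{+}_{g}(U)$, and invoking (b) and (c) does not close the gap without further work. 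Finally, your argument for (f) does not rule out that a single component $C$ is compactly contained in $\hat{\mathbb{R}}^d\setminus S$: the fact that backward orbits accumulate on $S$ tells you $\overline{J_S(g)}$ meets $S$, not that the closure of each component does. The result in \cite{NS1} uses a more delicate argument (via the fast escaping set and connectedness properties near the essential singularities) that your ingredients do not capture.
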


\subsection{Proof of Theorem~\ref{JuliaSetsAgree}}
	
Let $f: \mathbb{C} \to \hat{\mathbb{C}}$ be a transcendental meromorphic function with at least one pole, let $\mathcal{F}(f)$ denote the Fatou set for $f$ and let $J_{mero}(f) := \hat{\mathbb{C}} \setminus \mathcal{F}(f)$ denote the classical Julia set of $f$. We shall identify $\mathbb{C}$ with $\mathbb{R}^2$ in the usual way.

If $x \in \hat{\mathbb{C}} \setminus J(f)$, then by \eqref{JuliaSetDefn} there exists some neighbourhood $U_{x}$ of $x$ such that $f^n$ is well defined for all $n \in \mathbb{N}$ and $\card(\hat{\mathbb{C}} \setminus \mathcal{O}_{f}^{+}(U_x)) = \infty$. Now $\{f^n: n \in \mathbb{N}\}$ forms a normal family on $U_{x}$ by Montel's Theorem, hence $x \in \mathcal{F}(f)$. 

Conversely if $x \in \mathcal{F}(f)$, then there exists an open neighbourhood $V_{x}$ of $x$ such that $f^n$ is well defined for all $n \in \mathbb{N}$ and $\{f^n: n \in \mathbb{N}\}$ forms a normal family on $V_{x}$. Since $\mathcal{F}(f)$ is completely invariant, then $\mathcal{O}_{f}^{+}(V_{x}) \subset \mathcal{F}(f)$, therefore $J_{mero}(f) \subset \hat{\mathbb{C}} \setminus \mathcal{O}_{f}^{+}(V_x)$. Finally, as $J_{mero}(f)$ is infinite then $\card(\hat{\mathbb{C}} \setminus \mathcal{O}_{f}^{+}(V_x)) = \infty$. Hence $x \in \hat{\mathbb{C}} \setminus J(f)$ as required.

	\section{Proof of Theorem~\ref{JuliaSetProperties} when $\mathcal{O}^{-}_{f}(\infty)$ is finite}
	
	Suppose that $f: \mathbb{R}^d \to \hat{\mathbb{R}}^d$ is a $K$-quasimeromorphic mapping of transcendental type and $2 \leq \card(\mathcal{O}^{-}_{f}(\infty)) < q$.
	As infinity is an essential singularity for $f$ and $\card(\mathcal{O}^{-}_{f}(\infty)) < q$, then we find that $\mathcal{O}^{-}_{f}(\infty)$ forms the set of essential singularities for $f^{q}$ and for higher iterates of $f$. In addition, $\mathcal{O}^{-}_{f}(\infty)$ is omitted from the image of $f^{q}$ and higher iterates. Therefore, by setting $S := \mathcal{O}^{-}_{f}(\infty)$, it follows that $f^n: \hat{\mathbb{R}}^d \setminus S \to \hat{\mathbb{R}}^d \setminus S$ is a quasiregular mapping of $S$-transcendental type for all $n \geq q$. From this, we may appeal to the Fatou-Julia theory for quasiregular mappings on $S$-punctured space. This provides an approach to proving Theorem~\ref{JuliaSetProperties}.
	
	Firstly, we shall aim to prove a result concerning the relationship between $J(f)$ and $J_{S}(f^n)$ for $n \geq q$. For this, we require a few observations. Indeed, note that by applying Theorem~\ref{J_S(g)Properties}(e) twice, then for all $n \geq q$,
	\begin{equation}\label{(d)Extension}
	J_{S}(f^n) = J_{S}(f^{nq}) = J_{S}(f^{q}).
	\end{equation}
	
Further, by Theorem~\ref{J_S(g)Properties}(b) we can see that, $f(x) \in J_{S}(f^{q})$ if and only if $f^{q+1}(x) \in  J_{S}(f^{q})$, and $f^{q+1}(x) \in J_{S}(f^{q+1})$ if and only if $x \in J_{S}(f^{q+1})$. Since $J_{S}(f^{q}) = J_{S}(f^{q+1})$ by \eqref{(d)Extension}, then we conclude that
\begin{equation}\label{J_S(f)InvarianceViaf}
f(x) \in J_{S}(f^{q}) \text{ if and only if } x \in J_{S}(f^{q}).
\end{equation}

\begin{thm}\label{JuliaRelations}
Let $f: \mathbb{R}^d \to \hat{\mathbb{R}}^d$ be a $K$-quasimeromorphic mapping of transcendental type and at least one pole, such that $S:= \mathcal{O}^{-}_{f}(\infty)$ is finite. Then for all $n \geq q$, 
\begin{equation}
J(f) = J_{S}(f^n) \cup S.
\end{equation}
\end{thm}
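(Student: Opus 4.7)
The plan is to prove the equivalent identity $J(f)\setminus S = J_S(f^n)$, reducing to the case $n=q$ by \eqref{(d)Extension}. The inclusion $J_S(f^q)\cup S\subset J(f)$ is immediate: $S\subset J(f)$ by definition, and for any $x\in J_S(f^q)$ and any neighbourhood $U\subset\hat{\mathbb{R}}^d\setminus S$ of $x$, one has $\mathcal{O}^+_f(U)\supset\mathcal{O}^+_{f^q}(U)$, so the finite-complement property of the latter is inherited by the former.

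For the reverse inclusion $J(f)\setminus S\subset J_S(f^q)$, fix $x\in J(f)\setminus S$ and a neighbourhood $U\subset\hat{\mathbb{R}}^d\setminus S$ of $x$, and set $A:=\mathcal{O}^+_{f^q}(U)$. Writing each $k\geq 0$ as $k=qj+r$ with $0\leq r<q$ gives the decomposition
\[
\mathcal{O}^+_f(U)=\bigcup_{r=0}^{q-1}f^r(A).
\]
Since $x\in J(f)$ the left-hand side is cofinite in $\hat{\mathbb{R}}^d$, while $J_S(f^q)$ is infinite by Theorem~\ref{J_S(g)Properties}(a), so $J_S(f^q)\cap f^r(A)\neq\varnothing$ for at least one $r\in\{0,\ldots,q-1\}$.

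I would argue next that this intersection condition already forces $f^r(A)$ to be cofinite. Since $A$ is forward invariant under $f^q$, so is $f^r(A)=\mathcal{O}^+_{f^q}(f^r(U))$; hence $B_r:=(\hat{\mathbb{R}}^d\setminus S)\setminus f^r(A)$ is a closed subset of $\hat{\mathbb{R}}^d\setminus S$ that is backward invariant under $f^q$. If $B_r$ were infinite, then as $E(f^q)$ is finite I could choose $y\in B_r\setminus E(f^q)$; backward invariance gives $\mathcal{O}^-_{f^q}(y)\subset B_r$, and Theorem~\ref{J_S(g)Properties}(c) then yields $J_S(f^q)\subset\overline{\mathcal{O}^-_{f^q}(y)}\subset B_r$, contradicting $J_S(f^q)\cap f^r(A)\neq\varnothing$. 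So $B_r$ is finite and, since $|S|<\infty$, the open set $f^r(A)$ is cofinite in $\hat{\mathbb{R}}^d$.

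Finally I would transfer cofiniteness back to $A$ by one further application of $f^q$. Setting $m:=2q-r$, we have $m\geq q$ and $m+r=2q$, so $f^m$ is a quasiregular mapping of $S$-transcendental type on $\hat{\mathbb{R}}^d\setminus S$, with each $s\in S$ as an essential singularity. Because $f^r(A)$ is open and cofinite it contains a punctured neighbourhood of every such $s$, so Rickman's theorem (Theorem~\ref{BigPicard}) applied at $s$ shows that $f^m(f^r(A))$ omits at most $q-1$ values from $\hat{\mathbb{R}}^d$; in particular it is cofinite. Since $f^m(f^r(A))=f^{2q}(A)\subset A$ by forward invariance, the set $A=\mathcal{O}^+_{f^q}(U)$ is cofinite, and hence $x\in J_S(f^q)$ as required. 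The step I expect to require the most care is the cofiniteness of $f^r(A)$: it couples the forward invariance of $f^r(A)$ under $f^q$ with Theorem~\ref{J_S(g)Properties}(c), leveraging the weak fact that $J_S(f^q)$ merely meets $f^r(A)$ into the much stronger conclusion that $f^r(A)$ exhausts $\hat{\mathbb{R}}^d$ up to a finite set.
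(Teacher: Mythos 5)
Your proposal is correct, but the decisive step is handled quite differently from the paper. Both arguments reduce to $n=q$ via \eqref{(d)Extension}, dispose of the inclusion $J_S(f^q)\cup S\subset J(f)$ in the same way, and begin the hard inclusion with the same observation that the cofinite set $\mathcal{O}^{+}_{f}(U)$ must meet the infinite set $J_{S}(f^{q})$. From there the paper finishes in two lines: by \eqref{J_S(f)InvarianceViaf} (complete invariance of $J_{S}(f^{q})$ under $f$, obtained from Theorem~\ref{J_S(g)Properties}(b) together with \eqref{(d)Extension}) the intersection point pulls back into $U$ itself, and then the closedness of $J_{S}(f^{q})$ in $\hat{\mathbb{R}}^d\setminus S$ plus the arbitrariness of $U$ give $x\in J_{S}(f^{q})$. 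You instead verify the defining blow-up property of $J_{S}(f^{q})$ at $x$ directly: you decompose $\mathcal{O}^{+}_{f}(U)=\bigcup_{r=0}^{q-1}f^{r}(A)$ with $A=\mathcal{O}^{+}_{f^{q}}(U)$, upgrade ``$J_{S}(f^{q})$ meets $f^{r}(A)$'' to ``$f^{r}(A)$ is cofinite'' via backward invariance of its complement and Theorem~\ref{J_S(g)Properties}(c), and then push cofiniteness back into $A$ using Rickman's theorem at an essential singularity of $f^{2q-r}$. This buys independence from property (b) and from the closedness of $J_{S}(f^{q})$, at the cost of invoking (c), the finiteness of $E(f^{q})$, openness of the iterates, and the Picard-type theorem; the paper's route is shorter and softer, yours is more hands-on and yields the cofiniteness of every $\mathcal{O}^{+}_{f^{q}}(U)$ without passing through membership in a closed invariant set. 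Two small touch-ups: the finiteness of $E(f^{q})$ deserves a one-line justification (it follows from Theorem~\ref{BigPicard} applied to $f^{q}$ near $\infty$, as in \cite{NS1}); and the bound ``omits at most $q-1$ values'' for $f^{m}$ is not correct as stated, since the relevant Rickman constant is $q(d,K')$ for the larger dilatation $K'$ of $f^{m}$ rather than $q(d,K)$ --- but your argument only needs ``at most finitely many omitted values'', which does hold, and it is cleanest to apply Theorem~\ref{BigPicard} at $s=\infty\in S$, where no M\"obius change of variables is required.
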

	
\begin{proof}
	By \eqref{(d)Extension} and \eqref{JuliaSetDefn}, it will suffice to prove that $J(f) \setminus \mathcal{O}^{-}_{f}(\infty) = J_{S}(f^{q})$. Indeed, firstly note that the reverse inclusion is clear. This is because for any open neighbourhood $U_{x} \subset \hat{\mathbb{R}}^d \setminus \mathcal{O}^{-}_{f}(\infty)$ of a point $x \in J_{S}(f^{q})$, we have
\begin{equation*}
\hat{\mathbb{R}}^d \setminus \mathcal{O}^{+}_{f}(U_{x}) \subset \hat{\mathbb{R}}^d \setminus \mathcal{O}^{+}_{f^{q}}(U_{x}).
\end{equation*}
This means that if $\hat{\mathbb{R}}^d \setminus \mathcal{O}^{+}_{f^{q}}(U_{x})$ is finite, then $\hat{\mathbb{R}}^d \setminus \mathcal{O}^{+}_{f}(U_{x})$ is finite as well.

For the other direction, let $x \in \hat{\mathbb{R}}^d \setminus \mathcal{O}^{-}_{f}(\infty)$ be such that for any open neighbourhood $V_{x} \subset \hat{\mathbb{R}}^d \setminus \mathcal{O}^{-}_{f}(\infty)$ of $x$, then $\card(\hat{\mathbb{R}}^d \setminus \mathcal{O}^{+}_{f}(V_{x}))$ is finite. Since $J_{S}(f^{q})$ is infinite from Theorem~\ref{J_S(g)Properties}(a), then we must have that $\mathcal{O}^{+}_{f}(V_{x}) \cap J_{S}(f^{q}) \neq \varnothing$. It follows by \eqref{J_S(f)InvarianceViaf} that $V_{x} \cap J_{S}(f^{q}) \neq \varnothing$. Finally as $J_{S}(f^{q})$ is closed in $\hat{\mathbb{R}}^d \setminus  \mathcal{O}^{-}_{f}(\infty)$ and $V_{x}$ was an arbitrary open neighbourhood of $x$, then $x \in J_{S}(f^{q})$ as required.
\end{proof}
	
	With Theorem~\ref{JuliaRelations} established, we can turn to the proof of Theorem~\ref{JuliaSetProperties}. Indeed, firstly observe that Theorem~\ref{J_S(g)Properties} states that many properties of the classical Julia set hold for $J_{S}(f^n)$ with $n \geq q$. In particular, Theorem~\ref{J_S(g)Properties}(f) gives us that the closure of every component $Y \subset \hat{\mathbb{R}}^d \setminus \mathcal{O}^{-}_{f}(\infty)$ of $J_{S}(f^{q})$ meets $\mathcal{O}^{-}_{f}(\infty)$. This means with trivial extensions to the arguments in \cite{NS1}, to encompass the case when $x \in \mathcal{O}^{-}_{f}(\infty)$, we find that the properties listed in Theorem~\ref{J_S(g)Properties} also hold for $J_{S}(f^n) \cup S$ for $n \geq q$. Therefore $\textbf{(i)}$-$\textbf{(v)}$ follows immediately from Theorem~\ref{J_S(g)Properties}.
	
	 Finally, $\textbf{(vi)}$ follows from $\textbf{(iv)}$. This is because for any $x \in \mathbb{R}^d \setminus E(f)$, any open set $U \subset \mathbb{R}^d$ intersecting $J(f)$ must also non-trivially intersect $\mathcal{O}^{-}_{f}(x)$.

	\begin{proof}[Proof of Theorem~\ref{JuliaIterates}]
	  Firstly, note that the case when $\card(\mathcal{O}^{-}_{f}(\infty)) = \infty$ is trivial by the remark immediately after Definition~\ref{JuliaSetDefn}.
	  
	 Now set $S = \mathcal{O}^{-}_{f}(\infty)$ and first note that the reverse inclusion in \eqref{JuliaIteratesEquation} is clear. This is because for any set $U \subset \hat{\mathbb{R}}^d \setminus S$ such that $\card(\mathcal{O}_{f^n}^{+}(U_{x}))$ is finite, then $\card(\mathcal{O}_{f}^{+}(U_{x}))$ is finite.
	  
	  For the other direction, note that for any given $n \in \mathbb{N}$ we have $J(f) \setminus S = J_{S}(f^{n{q}})$ by Theorem~\ref{JuliaRelations}. Now let $x \in J(f)\setminus S$ so for any neighbourhood $V_{x} \subset \hat{\mathbb{R}}^d \setminus S$ of $x$, we have that $\card(\hat{\mathbb{R}}^d \setminus\mathcal{O}_{f^{nq}}^{+}(V_{x}))< \infty$. This implies that $\card(\hat{\mathbb{R}}^d \setminus\mathcal{O}_{f^n}^{+}(V_{x}))< \infty$. Since $V_{x}$ was an arbitrary neighbourhood of $x$, then the result follows.
	  \end{proof}

\section{Proof of Theorem~\ref{JuliaSetProperties} when $\mathcal{O}^{-}_{f}(\infty)$ is infinite}

Throughout this section, we shall assume that $f: \mathbb{R}^d \to \hat{\mathbb{R}}^d$ is a quasimeromorphic mapping of transcendental type and $\card(\mathcal{O}^{-}_{f}(\infty)) = \infty$. We recall by an earlier remark that $J(f) = \overline{\mathcal{O}^{-}_{f}(\infty)}$ in this case; we shall use this to prove Theorem~\ref{JuliaSetProperties}.

Firstly observe that by the definition, $\textbf{(i)}$ is clearly satisfied and $J(f)$ is closed. Next, $\textbf{(ii)}$ will follow from the following lemma.

\begin{lem} 
$J(f)$ does not contain any isolated points, hence $J(f)$ is perfect.
\end{lem}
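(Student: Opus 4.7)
The plan is to show directly that $J(f) = \overline{\mathcal{O}^{-}_{f}(\infty)}$ contains no isolated points, which suffices for perfectness since $J(f)$ is closed. Any point of $\overline{\mathcal{O}^{-}_{f}(\infty)} \setminus \mathcal{O}^{-}_{f}(\infty)$ is automatically a limit of distinct points of $\mathcal{O}^{-}_{f}(\infty)$, hence not isolated in $J(f)$. So the real task is to show that every $x \in \mathcal{O}^{-}_{f}(\infty)$ itself is a limit point of $\mathcal{O}^{-}_{f}(\infty)$, which I will do by induction on the smallest integer $k \geq 0$ with $f^k(x) = \infty$.

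The base case $x = \infty$ is the main obstacle, and it is the one place where the hypothesis $\card(\mathcal{O}^{-}_{f}(\infty)) = \infty$ enters directly. Any spherical neighbourhood of $\infty$ contains a set of the form $A(\rho, \infty) \cup \{\infty\}$ for some large $\rho > 0$. Since $f$ is of transcendental type, Theorem~\ref{BigPicard} forbids $f|_{A(\rho, \infty)}$ from omitting $q$ or more values of $\hat{\mathbb{R}}^d$; in particular only finitely many points can be omitted. As $\mathcal{O}^{-}_{f}(\infty)$ is infinite, it must contain some non-omitted value $w$, and then there exists $y \in A(\rho, \infty)$ with $f(y) = w$, giving $y \in \mathcal{O}^{-}_{f}(\infty) \setminus \{\infty\}$ inside the chosen neighbourhood. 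Letting $\rho \to \infty$ produces a sequence of distinct points of $\mathcal{O}^{-}_{f}(\infty)$ converging to $\infty$ in the spherical metric.

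For the inductive step, assume every $y \in f^{-(k-1)}(\infty)$ is a limit point of $\mathcal{O}^{-}_{f}(\infty)$, and fix $x$ with $f^k(x) = \infty$, so that $y := f(x) \in f^{-(k-1)}(\infty)$. Given any neighbourhood $U$ of $x$, the openness of non-constant quasimeromorphic mappings (Reshetnyak's theorem, applied locally via the definition of quasimeromorphicity) guarantees that $f(U)$ is an open neighbourhood of $y$. By the inductive hypothesis, I can take a sequence $y_n \in \mathcal{O}^{-}_{f}(\infty)$ with $y_n \neq y$ and $y_n \to y$; for $n$ large each $y_n$ lies in the open set $f(U)$, so there exists $x_n \in U$ with $f(x_n) = y_n \in \mathcal{O}^{-}_{f}(\infty)$. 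Since $f(x_n) \neq f(x)$ automatically forces $x_n \neq x$, the point $x$ is indeed a limit point of $\mathcal{O}^{-}_{f}(\infty)$, closing the induction and finishing the proof.
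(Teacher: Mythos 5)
Your proof is correct and takes essentially the same route as the paper: both reduce the claim to showing that each point of $\mathcal{O}^{-}_{f}(\infty)$ is a limit point of $\mathcal{O}^{-}_{f}(\infty)$ (points of the closure being automatic), and both obtain this from Theorem~\ref{BigPicard}, which forces the infinite set $\mathcal{O}^{-}_{f}(\infty)$ to meet every punctured neighbourhood of $\infty$, followed by a pull-back using openness. The only difference is organisational: the paper pulls back in a single step, choosing a neighbourhood $V_{x}$ of the $N$-prepole $x$ on which $f^{N}$ is quasimeromorphic and whose closure avoids all other prepoles of order at most $N$, whereas you induct on the prepole order and pull back one application of $f$ at a time.
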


\begin{proof}
To prove this, it shall suffice to show that 
for each $x \in \mathcal{O}^{-}_{f}(\infty)$, every open neighbourhood $U_{x}$ of $x$ is such that $(U_{x} \setminus\{x\}) \cap \mathcal{O}^{-}_{f}(\infty) \neq \varnothing$. Indeed, fix some arbitrary $x \in \mathcal{O}^{-}_{f}(\infty)$, so there exists some $N \geq 0$ such that $f^N(x)=\infty$. Let $U_{x}$ be an open neighbourhood of $x$. Then there exists some open neighbourhood $V_{x} \subset U_{x}$ such that $f^N$ is quasimeromorphic on $V_{x}$ and
\begin{equation*}
\overline{V_{x}} \cap \bigcup_{n=0}^{N}f^{-n}(\infty) = \{x\},
\end{equation*}
so $f^N(V_{x})$ is an open set around infinity. 

Since $\card(\mathcal{O}^{-}_{f}(\infty)) = \infty$, then by Theorem~\ref{BigPicard} there exists some $s \in (\mathcal{O}^{-}_{f}(\infty) \setminus \{\infty\}) \cap f^{N}(V_{x})$. This implies that there exists some $v_{s} \in V_{x}$ such that $f^N(v_{s}) = s$, whence $v_{s} \in \mathcal{O}^{-}_{f}(\infty)$. It remains to note that $v_{s} \neq x$ since $f^{N}(v_{s}) \neq \infty$. Thus $(V_{x} \setminus \{x\}) \cap \mathcal{O}^{-}_{f}(\infty) \neq \varnothing$, and so $(U_{x} \setminus \{x\}) \cap \mathcal{O}^{-}_{f}(\infty) \neq \varnothing$ as required.
\end{proof}

For $\textbf{(iii)}$, first let $x \in J(f) \setminus \{\infty\}$. If $x \in \mathcal{O}^{-}_{f}(\infty) \setminus \{\infty\}$, then there exists some $N \in \mathbb{N}$ such that $f^N(x) = \infty$. Now $f(x)$ is defined with $f^{N-1}(f(x)) = \infty$, thus $f(x) \in J(f)$. If $x \in \overline{\mathcal{O}^{-}_{f}(\infty)} \setminus \mathcal{O}^{-}_{f}(\infty)$, then there exists $x_{n} \in \mathcal{O}^{-}_{f}(\infty) \setminus \{\infty\}$ such that $x_{n} \to x$ as $n \to \infty$. It then follows that $f(x_{n})$ exists for each $n \in \mathbb{N}$. As $f$ is continuous, then $f(x_{n}) \to f(x)$ as $n \to \infty$. Therefore as $J(f)$ is closed, we conclude that $f(x) \in J(f)$. In particular since $x \not \in \mathcal{O}^{-}_{f}(\infty)$, then $f(x)\not \in \mathcal{O}^{-}_{f}(\infty)$ either, so $f(x) \in J(f) \setminus \mathcal{O}^{-}_{f}(\infty)$.

For the other direction, let $f(x) \in J(f)$ for some $x \in \mathbb{R}^d$. If $f(x) \in \mathcal{O}^{-}_{f}(\infty)$, then $x \in \mathcal{O}^{-}_{f}(\infty) \setminus \{\infty\}$ by definition. So suppose that $f(x) \in \overline{\mathcal{O}^{-}_{f}(\infty)} \setminus \mathcal{O}^{-}_{f}(\infty)$ and let $U$ be a neighbourhood of $x$. Since $f(x)$ is defined and $f$ is an open mapping, then there exists some neighbourhood $V$ of $f(x)$ such that $f(U) \supset V$. As $f(x)$ is a limit point of $\mathcal{O}^{-}_{f}(\infty)$, there exists some $y_{n} \to f(x)$ such that $y_{n} \in V \cap \mathcal{O}^{-}_{f}(\infty)$ for all large $n$. This means for all large $n \in \mathbb{N}$ there exists $x_{n} \in U \cap \mathcal{O}^{-}_{f}(\infty)$ with $f(x_{n})=y_{n}$. As $U$ was an arbitrary neighbourhood, then we must have $x_{n} \to x$ as $n \to \infty$, thus $x \in \overline{\mathcal{O}^{-}_{f}(\infty)} \setminus \{\infty\} = J(f) \setminus \{\infty\}$.

Next, we shall prove $\textbf{(iv)}$ in the following lemma.

\begin{lem}\label{(iv)}
For all $x \in \hat{\mathbb{R}}^d \setminus E(f)$, $J(f) \subset \overline{\mathcal{O}^{-}_{f}(x)}$.
\end{lem}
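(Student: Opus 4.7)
The plan is to leverage the identity $J(f) = \overline{\mathcal{O}^{-}_{f}(\infty)}$, valid in this setting, so that proving $\mathcal{O}^{-}_{f}(\infty) \subset \overline{\mathcal{O}^{-}_{f}(x)}$ suffices (taking closures then yields the inclusion, as $\overline{\mathcal{O}^{-}_{f}(x)}$ is closed). To this end I would fix an arbitrary $y \in \mathcal{O}^{-}_{f}(\infty)$ with $f^{N}(y) = \infty$ and an arbitrary open neighbourhood $U$ of $y$, and then shrink $U$ to a smaller neighbourhood $V$ of $y$ on which $f^{N}$ is quasimeromorphic and $V \cap \bigcup_{k=0}^{N} f^{-k}(\infty) = \{y\}$; such a $V$ exists by the discreteness of these preimage sets, exactly as in the proof of the preceding lemma. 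Since $f^{N}$ is open, $f^{N}(V)$ is then an open neighbourhood of $\infty$ in $\hat{\mathbb{R}}^d$, hence $f^{N}(V) \supset A(\rho, \infty) \cup \{\infty\}$ for some $\rho > 0$.

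Given this setup, the strategy is to find a point $w \in \mathcal{O}^{-}_{f}(x) \cap A(\rho, \infty)$. Writing $w \in f^{-k}(x)$ for some $k \geq 1$ and using that $w \in f^{N}(V)$, we can pull back through $f^{N}$ to obtain $y' \in V \subset U$ with $f^{N+k}(y') = x$, placing $y'$ in $U \cap \mathcal{O}^{-}_{f}(x)$. The remaining case $y \in \overline{\mathcal{O}^{-}_{f}(\infty)} \setminus \mathcal{O}^{-}_{f}(\infty)$ then follows at once, since such $y$ is a limit of points already shown to lie in the closed set $\overline{\mathcal{O}^{-}_{f}(x)}$. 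The heart of the proof, and the only real obstacle, is therefore to establish that $\mathcal{O}^{-}_{f}(x) \cap A(\rho, \infty) \neq \varnothing$ for every $\rho > 0$, equivalently, that $\mathcal{O}^{-}_{f}(x)$ is unbounded in $\mathbb{R}^d$ whenever $x \notin E(f)$.

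I would establish this unboundedness by contradiction. Suppose $\mathcal{O}^{-}_{f}(x) \subset \overline{B(0,R)}$ for some $R > 0$. The driving observation is that if $v \in f^{-m}(x)$ for any $m \geq 1$, then every preimage $u \in f^{-1}(v)$ satisfies $f^{m+1}(u) = x$, forcing $u \in \mathcal{O}^{-}_{f}(x) \subset \overline{B(0,R)}$. Consequently $f^{-1}(v) \cap A(R, \infty) = \varnothing$, meaning the entire set $\bigcup_{m \geq 1} f^{-m}(x)$ is omitted by the restriction $f|_{A(R,\infty)}$. Since $x \notin E(f)$ makes $\bigcup_{m \geq 1} f^{-m}(x)$ infinite, and since $f$ being of transcendental type precludes a limit at $\infty$, Theorem~\ref{BigPicard} applied on $A(R, \infty)$ permits at most $q - 1$ omitted values, yielding the desired contradiction.
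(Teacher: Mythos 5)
Your proof is correct and follows essentially the same route as the paper: reduce to showing each point of $\mathcal{O}^{-}_{f}(\infty)$ lies in the closed set $\overline{\mathcal{O}^{-}_{f}(x)}$, map a small neighbourhood of the prepole over a neighbourhood of infinity via $f^{N}$, and pull back points of $\mathcal{O}^{-}_{f}(x)$ located near infinity. The only difference is presentational: the paper uses Theorem~\ref{BigPicard} (with $E(f)$ finite) to produce a sequence $w_{n} \to \infty$ in $\mathcal{O}^{-}_{f}(x)$ directly, whereas you derive the same unboundedness by contradiction, noting that a bounded backward orbit would make $f$ omit the infinite set $\bigcup_{m \geq 1} f^{-m}(x)$ on a neighbourhood of infinity, contrary to Theorem~\ref{BigPicard} and transcendental type.
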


\begin{proof}
Let $x \in \hat{\mathbb{R}}^d \setminus E(f)$. Note that the result would be trivial if $x = \infty$, so assume that $x \neq \infty$. Now since $x \not \in E(f)$, then $\card(\mathcal{O}^{-}_{f}(x))=\infty$. As $E(f)$ is a finite set, then by Theorem~\ref{BigPicard} there exists $w_{n} \to \infty$ such that for each $n \in \mathbb{N}$, $w_{n} \in \mathcal{O}^{-}_{f}(x)$ and $\card(\mathcal{O}^{-}_{f}(w_{n}))=\infty$.

Now let $y \in \mathcal{O}^{-}_{f}(\infty)$ and let $\varepsilon>0$ be given. Then there exists some $k \in \mathbb{N}_{0}$ such that $f^k(y)=\infty$. Since $f^k$ is discrete, then there exists some $R >0$ and $0< \delta \leq \varepsilon$ such that $f^k$ is quasimeromorphic on $B(y, \delta)$ and $f^k(B(y,\delta)) \supset A(R, \infty)$. As $w_{n} \to \infty = f^k(y)$, then there exists some $N \in \mathbb{N}$ such that $w_{n} \in A(R, \infty)$ whenever $n \geq N$. Hence for all $n \geq N$, there exists $y_{n} \in B(y, \delta)$ such that $y_{n} \in f^{-k}(w_{n}) \subset \mathcal{O}^{-}_{f}(x)$. Since $\varepsilon>0$ was arbitrary, then $y_{n} \to y$ as $n \to \infty$, therefore $y \in \overline{\mathcal{O}^{-}_{f}(x)}$ as required.
\end{proof}

It is easy to see that $\textbf{(v)}$ follows immediately from $\textbf{(iv)}$; $J(f) \subset \overline{\mathcal{O}^{-}_{f}(x)}$ follows from Lemma~\ref{(iv)}, while the other direction follows from $\textbf{(iii)}$ and the fact that $J(f)$ is closed. Further, $\textbf{(vi)}$ also follows immediately from $\textbf{(iv)}$. This is because for every $x \in \hat{\mathbb{R}}^d \setminus E(f)$, any open set $U \subset \hat{\mathbb{R}}^d$ intersecting $J(f)$ must also non-trivially intersect $\mathcal{O}^{-}_{f}(x)$. This completes the proof of Theorem~\ref{JuliaSetProperties}.

\section{Proof of Theorem~\ref{backwardOrbitCapacity}}

	In this section we shall establish Theorem~\ref{backwardOrbitCapacity}, which is motivated by the conjecture in \cite{BN1}. This conjecture is  concerned with whether the capacity definition used in \cite{BN1} to define the Julia set for mappings without poles is equivalent to a cardinality condition as used in Definition~\ref{JuliaSetDefn}.

	Suppose that $f: \mathbb{R}^d \to \hat{\mathbb{R}}^d$ is a quasimeromorphic mapping of transcendental type with at least one pole. By considering a large iterate of $f$ and using Proposition~3.4 in \cite{NS1}, the case of Theorem~\ref{backwardOrbitCapacity} when $\mathcal{O}^{-}_{f}(\infty)$ is finite has already been established. Hence within this section, we shall only consider the case when $\card(\mathcal{O}^{-}_{f}(\infty))= \infty$.
	
	 We shall first prove a covering result for the neighbourhoods of poles. To simplify notation, we shall say that $x$ is an $m$-prepole of $f$ if $f^{m}(x)=\infty$ for some $m \in \mathbb{N}$, where a 1-prepole is precisely the same as a pole.  Now observe that if $\card(\mathcal{O}^{-}_{f}(\infty)) = \infty$, then there exist some point $y \in \mathcal{O}^{-}_{f}(\infty)$ and $N \in \mathbb{N}$ such that $y$ is an $N$-prepole of $f$ and $\card(f^{-1}(y)) = \infty$.

\begin{lem} \label{EExists}
	Let $f: \mathbb{R}^d \to \hat{\mathbb{R}}^d$ be a quasimeromorphic mapping of transcendental type. Suppose that there exists an open bounded neighbourhood $U \subset \mathbb{R}^d$ of an $N$-prepole of $f$ such that $f^N$ is quasimeromorphic on $U$ and $f^{-1}(u)$ is infinite for all $u \in \overline{U}$. Then given any $r >0$, there exists an open bounded set $E_{U} \subset A(r,\infty)$ such that $f^N(U) \supset \overline{E_{U}}$ and $f(E_{U}) \supset \overline{U}$.
\end{lem}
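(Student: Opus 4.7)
The plan is to construct $E_U$ as a finite union of small open preimages of covering balls of $\overline{U}$, placed far from the origin. Two ingredients drive the construction. First, since $f^N$ is quasimeromorphic on $U$ and $U$ contains an $N$-prepole, openness of $f^N$ (Reshetnyak's theorem) forces $f^N(U)$ to be an open neighbourhood of $\infty$ in $\hat{\mathbb{R}}^d$. Hence there exists $R > r$ with $A(R,\infty) \cup \{\infty\} \subset f^N(U)$. Second, for each $u \in \overline{U}$ the set $f^{-1}(u)$ is infinite and discrete (discreteness again by Reshetnyak), so cannot be bounded, since a bounded discrete subset of $\mathbb{R}^d$ is finite; thus preimages of $u$ accumulate only at $\infty$.

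Using these, for each $u \in \overline{U}$ I would pick a preimage $x_u \in f^{-1}(u)$ with $|x_u| > R+1$ and a small bounded open neighbourhood $W_u$ of $x_u$ with $\overline{W_u} \subset A(R+1, \infty)$. By the openness of $f$, the image $f(W_u)$ is open and contains $u = f(x_u)$, so contains some ball $B(u, \eta_u)$. The collection $\{B(u,\eta_u) : u \in \overline{U}\}$ is an open cover of the compact set $\overline{U}$, so admits a finite subcover $\{B(u_i, \eta_{u_i})\}_{i=1}^{k}$. I would then set $E_U := \bigcup_{i=1}^{k} W_{u_i}$.

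It remains to verify the three conclusions. Openness, boundedness, and the inclusion $E_U \subset A(r,\infty)$ are immediate from the construction. The inclusion $f(E_U) \supset \overline{U}$ follows because $f(E_U) \supset \bigcup_i B(u_i, \eta_{u_i}) \supset \overline{U}$. Finally, $\overline{E_U} \subset \bigcup_i \overline{W_{u_i}} \subset A(R+1, \infty) \subset f^N(U)$ is a compact subset of $\mathbb{R}^d$ sitting inside $f^N(U)$, giving $f^N(U) \supset \overline{E_U}$. The main subtlety lies not in any individual step but in the order of choices: one must fix $R$ first so that $A(R+1, \infty) \subset f^N(U)$, then use the infinitude of $f^{-1}(u)$ to place each $x_u$ beyond $R+1$, and only then shrink $W_u$ to keep it in $A(R+1,\infty)$ while retaining $B(u,\eta_u) \subset f(W_u)$. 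Apart from this bookkeeping, every step is a routine application of discreteness and openness of quasimeromorphic mappings together with compactness of $\overline{U}$.
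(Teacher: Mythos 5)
Your proposal is correct and takes essentially the same route as the paper's proof: $f^N(U)$ is an open neighbourhood of $\infty$, the infinitude and discreteness of $f^{-1}(u)$ place a preimage $x_u$ beyond any prescribed radius, openness of $f$ yields balls $B(u,\eta_u)\subset f(W_u)$, and compactness of $\overline{U}$ gives the finite union defining $E_U$. One small nit: a bounded discrete subset of $\mathbb{R}^d$ need not be finite in general (consider $\{1/n : n\in\mathbb{N}\}$); your unboundedness claim is nonetheless valid because continuity of $f$ makes the fibre $f^{-1}(u)$ closed as well as discrete, so boundedness would force finiteness.
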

\begin{proof}
	Since $f^N$ is quasimeromorphic on $U$ and $U$ is an open set containing an $N$-prepole of $f$, then $f^N(U)$ is an open set covering infinity. Now by assuming without loss of generality that $r>0$ is sufficiently large, then $\overline{A(r, \infty)} \subset f^N(U)$. 
	
	Let $u \in \overline{U}$. As $f^{-1}(u)$ is infinite and $f$ is a discrete mapping, there exists $x_u \in A(r+1, \infty)$ such that $f(x_u) = u$. As $f$ is open, then $B_{u}:= B^d(x_u, 1) \subset A(r, \infty)$ and $f(B_{u}) \supset B^d(u, \delta_{u})$ for some $\delta_{u}>0$. Thus we can construct an open cover 
	\begin{equation*}	
	\bigcup_{u \in \overline{U}}f(B_{u}) \supset \bigcup_{u \in \overline{U}}B^d(u, \delta_{u}) \supset \overline{U}. 
	\end{equation*}
	
	As $\overline{U}$ is non-empty and bounded, then it is compact. Hence there exists some $n \in \mathbb{N}$ and $u_{i} \in \overline{U}$, $i = 1,2,\dots ,n$, such that 
	\begin{equation} \label{property1}
	\bigcup_{i=1}^{n}f(B_{u_{i}}) \supset \overline{U}.
	\end{equation}
	
	Now define 
	\begin{equation} \label{property3}
	E_{U} := \bigcup_{i=1}^{n}B_{u_{i}}. 
	\end{equation}
	
	Observe that $B_{u_{i}} \subset A(r, \infty)$ for each $i=1,2,\dots,n$, so $\overline{E_{U}}~\subset~\overline{A(r, \infty)}~\subset f^N(U)$. Further, as each $B_{u_{i}}$ is bounded then $E_{U}$ is bounded. Hence from \eqref{property1} and \eqref{property3}, $E_{U}$ is the bounded set as required. 
\end{proof}

\begin{proof}[Proof of Theorem~\ref{backwardOrbitCapacity}]

Firstly, we shall show that for some $N \in \mathbb{N}$, there exists an $N$-prepole $y$ of $f$ and a bounded neighbourhood $\overline{U_{y}}$ of $y$ such that for all $w \in \overline{U_{y}}$, $\capacity\left(\overline{\mathcal{O}^{-}_{f}(w)}\right)> 0$. Indeed, as $\card(\mathcal{O}^{-}_{f}(\infty))=\infty$, then by Theorem~\ref{BigPicard} and the definition of the backwards orbit there must exist some $N \in \mathbb{N}$, $y \in \mathcal{O}^{-}_{f}(\infty)$ and some bounded neighbourhood $U_{y}$ of $y$ such that $y$ is an $N$-prepole of $f$, $\card(f^{-1}(u)) = \infty$ for all $u \in \overline{U_{y}}$ and $f^N$ is quasimeromorphic on $U_{y}$.

Now by repeatedly applying Lemma~\ref{EExists}, there exists a collection of open bounded sets $\{E_{n}: n \in \mathbb{N}\}$ with pairwise disjoint closures such that for all $n \in \mathbb{N}$, 
\begin{equation*}
f^N(U_{y}) \supset \overline{E_{n}} \text{ and } f(E_{n}) \supset \overline{U_{y}}.
\end{equation*}

Since these sets have pairwise disjoint closures, then there must exist pairwise disjoint closed sets $V_{n} \subset U_{y}$ such that for each $n \in \mathbb{N}$, $f^N(V_{n}) \supseteq \overline{E_{n}}$. In particular, this means that $f^{N+1}(V_{n}) \supset \overline{U_{y}}$.

By applying Lemma~2.9, Lemma~2.10 and Lemma~2.11 from \cite{BN1} with $m >K_{I}(f^{N+1})$ closed sets $V_{1}, V_{2}, \dots, V_{m}$, then for all $w \in \overline{U_{y}}$, we have that $\capacity\left(\overline{\mathcal{O}^{-}_{f^{N+1}}(w)}\right)> 0$. Since $\overline{\mathcal{O}^{-}_{f^{N+1}}(w)} \subset \overline{\mathcal{O}^{-}_{f}(w)}$ by definition, then for all $w \in \overline{U_{y}}$, $\capacity(\overline{\mathcal{O}^{-}_{f}(w)})>0$.

Now take some $x \in \mathbb{R}^d \setminus E(f)$, so $\card(\mathcal{O}^{-}_{f}(x)) = \infty$. Then by definition and Theorem~\ref{BigPicard}, there exists some $\alpha \in \mathcal{O}^{-}_{f}(x) \cap f^{N}(U_{y})$, where $y$ and $U_{y}$ are as above. In particular, this means that there exists some $u_{\alpha} \in U_{y}$ such that $f^N(u_{\alpha})=\alpha$. Hence $u_{\alpha} \in \mathcal{O}^{-}_{f}(x)$. As $u_{\alpha} \in \overline{U_{y}}$, then $\capacity\left(\overline{\mathcal{O}^{-}_{f}(u_{\alpha})}\right)> 0$. Therefore we have that $\capacity\left(\overline{\mathcal{O}^{-}_{f}(x)}\right)> 0$ as required.
\end{proof}

\end{document}